\theoremstyle{definition}
\newtheorem{defin}{Definition}[section]
\newtheorem{prop}[defin]{Proposition}
\newtheorem{lem}[defin]{Lemma}
\newtheorem{exam}[defin]{Example}
\newtheorem{cor}[defin]{Corollary}
\newtheorem{remark}[defin]{Remark}
\newtheorem{question}[defin]{Question}
\newtheorem{notation}[defin]{Notation}
\newtheoremstyle{TheoremNum}
{\topsep}{\topsep}              
{\itshape}                      
{}                              
{\bfseries}                     
{.}                             
{ }                             
{\thmname{#1}\thmnote{ \bfseries #3}}
\theoremstyle{TheoremNum}
\DeclareMathOperator{\calS}{{\mathcal S}}
\DeclareMathOperator{\Ult}{Ult}
\DeclareMathOperator{\RK}{RK}
\DeclareMathOperator{\Lim}{\mathbf{Lim}}
\DeclareMathOperator{\supp}{ \text{supp }}
\newcommand{\comment}[1]{}
\begin{document}

	\title[Countable compactness of finite powers of a Wallace semigroup]{A Wallace semigroup whose every finite power is countably compact}
	\author[J. L. J. Fuentes-Maguiña]{Juan Luis Jaisuño Fuentes-Magui\~na}
 	\author[V.  O. Rodrigues]{Vinicius de Oliveira Rodrigues}

	\author[A. H. Tomita]{Artur Hideyuki Tomita}
	\address{Departamento de Matemática, Instituto de Matemática e Estatística, Universidade de São Paulo, Rua do Mat\~ao, 1010 -- CEP 05508-090, S\~ao Paulo, SP - Brazil}
	\email{juanfm@ime.usp.br, vinior@ime.usp.br, tomita@ime.usp.br}

	\subjclass{Primary 54D20, 54H11, 22A15; Secondary 54A35, 54G20.}
	\date{}
	\commby{}
	\keywords{Wallace semigroup, topological group, countable compactness, selective ultrafilter, free Abelian group}

	\begin{abstract}
	We show that, assuming the existence of $\mathfrak{c}$ incomparable selective ultrafilters, there exists a Wallace semigroup whose infinite countable power is the least power which fails to be countably compact. This answers positively Question 9.4 of \cite{Tomita15}.  
	\end{abstract}
	
	\maketitle

\section{Introduction}

  \subsection{A bit of history}

    It is a classical and well-known result that compact two-sided cancellative  semigroups are topological groups (e.g. \cite{gelbaum1951embedding}). 
    In 1955, Wallace \cite{W} has asked if every countably compact two-sided cancellative semigroup is a topological group. This question has been known as the Wallace's problem, and an example yielding a negative answer is known as a Wallace semigroup. The first consistent example of such a semigroup, due to D. Robbie and S. Svetlichny, appeared forty-one years later in the Proceedings of the American Mathematical Society \cite{RS}, under the Continuum Hypothesis (CH). Since then, several examples of Wallace semigroups have been constructed from Martin's axiom and selective ultrafilters.

   Grant \cite{Grant} showed that sequentially compact two-sided cancellative semigroups are topological groups and asked whether $p$-compact two-sided cancellative semigroups are topological groups.    Recall that a topological space is $p$-compact for some free ultrafilter $p$ if and only if its $2^\mathfrak c$-th power is countably compact. Taking that into account, in \cite{Grant}, Grant also asked whether every two-sided cancellative topological semigroup with countably compact square is a topological group. The first question was answered positively in \cite{T96}, in ZFC. A consistent counter-example for the second question was given in \cite{Boero}. Thus, it natural to ask what is the highest degree of countably compactness which still admits a Wallace semigroup.
    
    In \cite{Boero}, Boero and Tomita consistently produced a group topology for $\mathbb Z^{(\mathfrak c)}$ whose square is countably compact using $\mathfrak c $ selective ultrafilters and, as a corollary, they showed that its subsemigroup $\mathbb N^{(\mathfrak c)}$ with the subspace topology was a Wallace semigroup whose square was still countably compact.
    
    Moreover, in \cite{Tomita15}, Tomita consistently proved that $\mathbb Z^{(\mathfrak c)}$ admits a group topology whose all finite powers are countably compact. For that, the existence of $\mathfrak{c}$ pairwise incomparable selective ultrafilters was assumed - an hypothesis weaker than both the Continuum Hypothesis and Martin's Axiom. The author tried to consider the subsemigroup $\mathbb N^{(\mathfrak c)}$, but he was unable to guarantee that its finite powers are countably compact as well. With that in mind, he asked whether there exists a Wallace semigroup whose all finite powers are countably compact (Problem 9.4.).
    
    In this paper, we consistently answer the preceding question positively by showing that,assuming the existence of $\mathfrak c$ incomparable selective ultrafilters, the semigroup $\mathbb N\oplus \mathbb Z^{(\mathfrak c)}$ can be endowed with a Hausdorff topology whose all finite powers are countably compact. We also show (in ZFC) that this semigroup cannot have a topology whose $\omega$-th power is countably compact. Note that this semigroup is also a subsemigroup of $\mathbb Z^{(\mathfrak c)}$.
    
    During the development of this paper, we also could not make an example out of $\mathbb N^{(\mathfrak c)}$. Thus, the existence of a semigroup topology in $\mathbb N^{(\mathfrak c)}$ whose every finite power is countably compact remains an open problem.

\subsection{Basic results and notation} 

    In this section, we mention some basic results and concepts used in this paper and fix some basic notation as well.
    
    $\Lim$ denotes the set of limit ordinals in $\mathfrak c$, the cardinality of the continuum.
    
    $\mathbb N$ denotes the subset of non-positive integers and $\omega$ is the first limit ordinal. These objects can, of course, be identified, but we reserve the symbol $\mathbb N$ for this specific use for clarity. We consider that $0\in \mathbb N\subseteq \mathbb Z\subseteq \mathbb Q$.

    For any object $a$, $\vec {a}$ denotes the unique sequence of constant value $a$. 
 
    Let $X$ be a non-empty set and $G$ be an Abelian group with unitary element $0$. Given a function $f : X \to G$, the support of $f$ is $\supp{f} = \lbrace x \in G : f(x) \neq 0 \rbrace$. The Abelian group $\lbrace f \in G^X : \vert \supp{f} \vert < \omega \rbrace$ will be denoted by  $G^{(X)}$. When $X$ is clear from the context, for any subset $Y \subset X$, $R^{(Y)}$ denotes the subgroup $\lbrace f \in G^{(X)} \mid \supp{f} \subset Y \rbrace$. If $\bar f$ is a sequence in $G^X$, $\supp \bar f=\bigcup_{i \in \omega} \supp f(i)$.

    Let $p$ be an ultrafilter. $\Ult_p(G)$ is the quotient $G^\omega/\simeq_p$, where $\simeq_p$ is the equivalence relation on $X^\omega$ defined by $f\simeq_p g$ if and only if $\{n \in \omega \mid f(n)=g(n)\} \in p$. $\Ult_p(G)$  is easily seen to have a natural Abelian group structure. Moreover, if $G$ is, in fact, a $R$-module for some commutative ring with unit $R$, then $\Ult_p(G)$ has a natural $R$-module structure.
    
    Given $f \in X^{\omega}$, $[f]_p$ is the equivalence class determined by $\simeq_p$. The mapping that associated $g \in G$ to $[\vec g]_p\in \Ult_p(G)$ is easily seen to be a group monomorphism. We say that an element of $\Ult_P(G)$ is constant if it is in the range of this monomorphism, that is, if it is of the form $[\vec g]_p$, with $g \in G$. The set of all constants of $\Ult_p(G)$ is denoted by $\underline{G}$. Notice that in the case that $G$ is also a $R$-module, the preceding mapping is in fact an $R$-module monomorphism.

    The following shorthand will come in handy:

        \begin{defin}
        Let $G$ be an Abelian group, $p$ be an ultrafilter, and $(f_i: i \in I)$ be a family of members of $G^\omega$.  We say that $(f_i: i \in I)$ is $p$-independent mod constants if:
        \begin{enumerate}
            \item the mapping $i\rightarrow [f_i]_p+\underline G\in \Ult_p(G)/\underline G$ is injective, and
            \item $\{[f_i]_p+\underline G: i \in I\}\subseteq \Ult_p(G)/\underline G$ is $\mathbb Z$-linearly independent.
        \end{enumerate}
    \end{defin} 

    We will be working with subspaces of the vector space $\mathbb Q^{(\mathfrak c)}$. In this context, for every $\alpha \in \mathfrak c$, $\chi_\alpha\in N^{(\mathfrak c)}$ denotes the characteristic function of $\{\alpha\}$, so $\chi_\alpha(\alpha)=1$ and $\chi_\alpha(\beta)=0$ for all $\beta\in \mathfrak c\setminus \{\alpha\}$.
    
    Given a topological space $X$, a sequence $f:\omega\rightarrow X$ and a free ultrafilter $p$, we say a point $x \in X$ is a $p$-limit of $f$ if for every neighborhood $U$ of $x$, $\{n \in \omega: f(n)\in U\}\in p$. If $X$ is Hausdorff, every sequence has at most one $p$-limit, and, when it exists, we denote it by $p$-$\lim$ $f$.  We say that a topological space is $p$-compact if every sequence has a $p$-limit.

    A countably compact space is a topological space such that every countable open cover has a finite subcover - or, equivalently, if every sequence in the space has an accumulation point. It is well known that every accumulation point of a sequence is also a $p$-limit of such sequence, for some ultrafilter $p$.

    As already mentioned, a Wallace semigroup is a two-sided cancellative countably compact topological semigroup which is not a topological group.
    
    A free ultrafilter $p$ is said to be selective (or Ramsey) if for every $f:[\omega]^2\rightarrow 2$ there exists $A \in p$ such that $f|_{[A]^2}$ is constant. It is well known that selective ultrafilters are exactly the minimal ultrafilter in the Rudin-Keisler order, which is the pre-order defined in the collection of all free ultrafilters defined by  $p \leq_{\RK} q$ if there exists a function $f : \omega \to \omega$ such that $p=\{f^{-1}[A]:A\in q\}$. Two selective ultrafilters are said to be incomparable if they are incomparable with respect to $\leq_{\RK}$. The existence of selective ultrafilters is known to be independent from ZFC. 

    The group $\mathbb T$ is the unit circle, which can be defined as the quotient $\mathbb R/\mathbb Z$.

\section{A briefing on stacks}

    In \cite{Tomita15}, a structure called \textit{stack} has been defined. In this section, we quickly review the main results regarding this structure. This section is not intended to be a reference on stacks - we only mention what is strictly necessary to make this paper self-contained. 
    
    Roughly speaking, a stack is a finite collection of sequences in $\mathbb Z^{(\mathfrak c)}$, along with an infinite set of indexes $A\subseteq \mathbb N$ and an integer $N$ (along with some other auxiliary objects) satisfying certain properties that helps to generate homomorphisms with accumulation and convergence properties of interest. In \cite{Tomita15}, Tomita proved that every collection of sequences in $\mathbb Z^{(\mathfrak c)}$ can be associated to an stack so that if $A$ is in some free ultrafilter $p$ and the sequences in this stack have $p$-limits in a group topology, so does the original collection. 
    
        Below we give the complete definition of an integer stack. We will not discuss all the details of the definition. The reader interested in why each of the bullet points is needed may refer to \cite{Tomita15}. We do not recommend trying to figure out the importance of each bullet point solely after reading the definition, and understanding that is not important for understanding the proofs in this paper.
       
    \begin{defin} 
    Let $A$ be an infinite subset of $\omega$. An integer stack $\mathcal S$ on $A$ is composed by
		\begin{itemize}
			\item natural numbers $s,t,M$, positive integers $r_i$ for each $0 \leq i < s$, and positive integers $r_{i,j}$ for each $0 \leq i < s$ and $0 \leq j < r_i$,
			\item functions $f_{i,j,k} \in ( \mathbb Z^{(\mathfrak c)})^{A}$ for each $0 \leq i < s$, $0 \leq j < r_i$, $0 \leq k < r_{i,j}$ and $g_l \in ( \mathbb Z^{(\mathfrak c)})^{A}$ for each $0 \leq l < t$, 
			\item sequences $\xi_i \in \mathfrak c^{A}$ for each $0 \leq i < s$ and $\mu_l \in \mathfrak c^{A}$ for $0 \leq l < t$, 
			\item real numbers $\theta_{i,j,k}$ for each $0 \leq i < s$, $0 \leq j < r_i$, $0 \leq k < r_{i,j}$,
		\end{itemize}
	satisfying the following
		\begin{enumerate}[label=\roman*)]
			\item $\mu_l(n) \in \supp g_l(n)$ for every $n \in A$,
            \item $\mu_{l^*}(n) \notin \supp g_l(n)$ for each $n \in A$ and $0 \leq l^* < l < t$,
			\item  the elements of $\lbrace \mu_l(n) \mid 0 \leq l < t, n \in A \rbrace$ are pairwise distinct,
			\item $\vert g_l(n) \vert \leq M$ for each $n \in A$ and $0 \leq l < t$,
			\item $(\theta_{i,j,k} \mid 0 \leq k < r_{i,j} )$ is injective and linearly independent (as a $\mathbb Q$-vector space) for each $0 \leq i < s$ and $0 \leq j < r_i$,
            \item $\left( \frac{f_{i,j,k}(n)(\xi_i(n))}{f_{i,j,0}(n)(\xi_i(n))} \right)_{n \in A} \to \theta_{i,j,k}$ for each $0 \leq i < s$, $0 \leq j < r_i$ and $0 \leq k < r_{i,j}$,
			\item $\left( \vert f_{i,j,k}(n)(\xi_i(n)) \vert \right)_{n \in A}$ converges monotonically to $+\infty$ for each $0 \leq i < s$, $0 \leq j < r_i$ and $0 \leq k < r_{i,j}$,
			\item $\vert f_{i,j,k}(n)(\xi_i(n)) \vert > \vert f_{i,j,k^*}(n)(\xi_i(n)) \vert$ for each $n \in A$, $i < s$, $j < r_i$ and $0 \leq k < k^* < r_{i,j}$,
			\item $\left( \frac{f_{i,j,k}(n)(\xi_i(n))}{f_{i,j^*,k^*}(n)(\xi_i(n))} \right)_{n \in A}$  converges monotonically to $0$ for each $0 \leq i < s$, $0 \leq j^* < j < r_i$, $0 \leq k < r_{i,j}$ and $0 \leq k^* < r_{i,j^*}$,
			\item $\lbrace f_{i,j,k}(n)(\xi_{i^*}(n)) \mid n \in A \rbrace \subset [-M, M]$ for each $0 \leq i^* < i < s$, $0 \leq j < r_i$ and $0 \leq k < r_{i,j}$.
		\end{enumerate}
    The support of an integer stack $\mathcal S$ on $A$ is defined by the union of the support of each $f_{i,j,k}(n)$ and $g_l(n)$ (for $n \in A$). 
	\end{defin}
     
    \begin{defin}
    Let $\mathcal S$ be an integer stack on $A$ and $N$ be a positive integer. The $N$th root of $\mathcal S$, denoted by $\frac{1}{N}. \mathcal S$, is obtained by replacing $f_{i,j,k}$ by $\frac{1}{N}. f_{i,j,k}$ for each $i < s$, $j < r_i$ and $k < r_{i,j}$, replacing $g_l$ by $\frac{1}{N}. g_{l}$ for each $l < t$, and keeping the rest of the structure of $\mathcal{S}$. 

    A \textit{stack} is a structure of the form $\frac1N \mathcal S$, where $\mathcal S$ is an integer stack and $N$ is a positive integer.
    \end{defin}

    The following proposition, originally from \cite[{Lemma 7.1.}]{Tomita15}. It has been restated in \cite{kp} with more specific statements. The latter version, in the notation of the current paper, reads as follows:

    \begin{lem}[Lemma 5.4., \cite{kp}]\label{stack}\label{Tomita15lemma7.1}
	Let $p$ be a selective ultrafilter, $m \in \omega$ and $(h_i: i <m)$ be sequences in $\mathbb Z^{(\mathfrak c)}$ which are $p$-independent mod constants (with respect to $G=\mathbb Z^{(\mathfrak c)}$).
 
 Then there exists $A\in p$, a positive integer $N$ and a integer stack $\calS$ on~$A$ 
	such that, for each $i<m$, $h_i|_A$  is an 
		integer combination of the elements of the sequences of the stack $\frac{1}{N} \calS$ restricted to $A$. 
		On the other hand, each sequence of the integer stack~$\calS$ is 
		an integer combination of $(h_i|A)_{i<m}$

	We will say in this case that the finite sequence 
	$( h_0, \ldots , h_{m-1})$ is $p$-associated to $(\calS, N, A)$.
	
\end{lem}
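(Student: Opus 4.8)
The plan is to view each $h_i$ as a sequence of integer vectors living on a common finite set of coordinates and to bring these vectors into an \emph{asymptotic echelon form} by finitely many refinements of $\omega$ to members of $p$. I will repeatedly use that a selective ultrafilter is simultaneously a P-point and a Q-point, so that: every function $\omega\to\mathfrak{c}$ is constant or one-to-one on a member of $p$; every sequence of reals is monotone, hence convergent in $[-\infty,+\infty]$, on a member of $p$; and a finite intersection of members of $p$ is again a member of $p$. First I would normalize the data: since $F_n:=\bigcup_{i<m}\supp h_i(n)$ is finite, choose $A_1\in p$ on which $|F_n|=d$ is constant, fix an enumeration $F_n=\{\zeta_0(n),\dots,\zeta_{d-1}(n)\}$, and shrink $A_1$ so that each $\zeta_c$ is constant or one-to-one on $A_1$ and, for $c\neq c'$, the sequences $\zeta_c,\zeta_{c'}$ either agree on $A_1$ or have disjoint ranges over $A_1$. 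Identify $h_i(n)|_{F_n}$ with the $i$-th row of an $m\times d$ integer matrix $H(n)$.

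Next, since only finitely many real sequences will ever be relevant --- entries of $H(n)$, ratios of entries, ratios of such ratios --- a finite sequence of shrinkings produces $A_2\in p$ on which every such sequence is monotone; in particular each entry sequence is either bounded or tends monotonically to $\pm\infty$. Now I would build the stack by an elimination that respects growth rates. The coordinate positions at which $\max_i|H(n)(i,c)|$ grows fastest form the first group, with a distinguished coordinate $\xi_0(n)$; after a further refinement (only finitely many patterns of which entry dominates which occur, so one can be fixed on a member of $p$) one uses $\mathbb{Q}$-linear algebra, carried out uniformly in $n$, to replace the $h_i$ by integer combinations $f_{0,j,k}$ whose values at $\xi_0(n)$ split into finitely many levels $j$ of matched growth rate --- yielding $\big(f_{0,j,k}(n)(\xi_0(n))/f_{0,j,0}(n)(\xi_0(n))\big)_n\to\theta_{0,j,k}$ together with the monotone divergence and inter-level decay conditions (vi)--(ix) --- and so that the combinations destined for later groups stay bounded at $\xi_0(n)$, which is condition (x). Iterating on the remaining coordinates yields $\xi_1,\dots,\xi_{s-1}$ and the families $f_{i,j,k}$; the coordinates at which every relevant combination stays bounded, together with their distinguished points, furnish the $g_l$ and $\mu_l$ and conditions (i)--(iv), and one more finite selective refinement makes all the $\mu_l(n)$ pairwise distinct, while discarding redundant $k$'s gives the $\mathbb{Q}$-independence of the $\theta_{i,j,k}$ in condition (v).

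Finally, the elimination only ever multiplies and divides by finitely many nonzero integers (the pivots and the denominators of the $\mathbb{Q}$-combinations used), so there is a single positive integer $N$ for which every $h_i|_A$ is an integer combination of the $\frac{1}{N}f_{i,j,k}$ and $\frac{1}{N}g_l$, while each $f_{i,j,k}$ and $g_l$ is, by construction, an integer combination of the $h_i|_A$; taking $A$ to be the intersection of the finitely many members of $p$ produced and relabeling indices finishes the construction. The hypothesis that $(h_i)_{i<m}$ is $p$-independent mod constants enters precisely here: it guarantees that no nontrivial integer combination of the $h_i$ is $\simeq_p$ to a constant sequence, so the elimination cannot collapse and the resulting echelon structure is non-degenerate.

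I expect the elimination step to be the main obstacle. It has to be organized so that clearing a ``fast'' coordinate does not destroy the boundedness already arranged at the ``slow'' ones --- coordinates must be processed, and combinations formed, in an order compatible with their asymptotic sizes --- and each of the finitely many coherence requirements (the support enumeration, the dominance pattern among entries, the signs of the monotone limits, the disjointness of the $\mu_l(n)$) must be realized on a member of $p$. Selectivity is exactly what delivers each such requirement, but one must keep the entire bookkeeping finite so that the final intersection of all the chosen $p$-sets still lies in $p$.
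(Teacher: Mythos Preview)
The paper does not contain a proof of this lemma at all: it is quoted verbatim as Lemma~7.1 of \cite{Tomita15} (restated as Lemma~5.4 of \cite{kp}) and used as a black box, so there is no ``paper's own proof'' to compare your attempt against.

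That said, your outline is a faithful summary of the strategy in the original sources. The proof in \cite{Tomita15} proceeds exactly by repeatedly refining to members of $p$ using selectivity (P-point and Q-point properties) so as to stabilize support sizes, coordinate enumerations, dominance patterns among entries, and monotone convergence of the relevant ratios, and then performs a growth-rate-respecting elimination that sorts the coordinates into the ``unbounded'' layers producing the $f_{i,j,k}$, $\xi_i$, $\theta_{i,j,k}$ and the ``bounded'' residue producing the $g_l$, $\mu_l$; the single denominator $N$ arises from clearing the finitely many rational pivots. Your identification of the elimination step as the delicate point is correct: in \cite{Tomita15} this is handled by an induction on the number of unbounded coordinate positions, processing them in order of dominant growth, and the bookkeeping is indeed finite so that only a finite intersection of $p$-sets is needed. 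Your remark on where $p$-independence mod constants is used is also accurate: it prevents a nontrivial integer combination of the $h_i$ from becoming $p$-almost constant, which would make the echelon form degenerate. One small caution: the claim that ``only finitely many real sequences will ever be relevant'' must be argued inductively rather than fixed in advance, since new ratio sequences appear after each elimination; the original proof handles this by bounding the total number of steps in terms of $m$ and $d$.
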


	It is worth mentioning that the hypothesis ``$(h_i: i<m)$ are sequences in $\mathbb Z^{(\mathfrak c)}$ which are $p$-independent mod constants'', in \ref{Tomita15lemma7.1}, reads as ``$(h_i:i<m)$  are sequences in $\mathbb Z^{(\mathfrak c)}$ such that the family $([h_i]_p:i<m)$ united with $([\vec \chi_{\alpha}]_p: \alpha<\mathfrak c)$ is linearly independent over $\mathbb Q$''. However, both these hypothesis are easily seen to be equivalent.

    We have the following:

    \begin{remark}\label{suptRemark}
        In the notation of Lemma \ref{Tomita15lemma7.1}, if $(h_i)_{i<m}$ is $p$-associated to $(\mathcal S, N, A)$, then  $\bigcup_{n \in A, i<m}\supp h_i(n)$ is the union of the supports of the elements of the integer stack $\mathcal S$. Also, if $M|N$, $(\mathcal S, M, A)$ is also associated to $(h_i)_{i<m}$.
    \end{remark}

    The importance of stacks with respect to countably compactness comes from the following result:

    \begin{lem}[Lemma 8.4, \cite{Tomita15}] \label{Tomita15lema8.4}
    Let  $(p_m : m \in \omega)$ be a family of incomparable selective ultrafilters and $( N_m : m \in \omega)$ be an enumeration of the positive integers such that each such integer is appears infinitely often.
    
    Let $\mathcal S_m$ be an integer stack on $A_m$ with $A_m \in p_m$ for each $m \in \omega$. In addition, consider $s^m$, $t^m$, $r^m_i$, $r^m_{i,j}$, $M^m$, $r^m$, $L^m$, $f^m_{i,j,k}$ and $g^m_l$ as part of the structure of $\mathcal S_m$.

    For any family $( c)^\frown(c^m_{i,j,k} : m < \omega, i < s^m, j < r^m_i, k < r^m_{i,j})^\frown (d^m_l \mid m < \omega, l < t^m)$ of $\mathbb Z^{(\mathfrak c)}$ with $c \neq 0$,  there exists a homomorphism $\phi : \mathbb{Q}^{(\mathfrak c)} \rightarrow \mathbb{\mathbb T}$ such that
        \begin{enumerate}
            \item[a)] $\phi(c) \neq 0$,
            \item[b)] $p_{m}- \lim\limits_{n \to \infty} \phi \left( \frac{1}{N_m} f^m_{i,j,k}(n) \right) = \phi \left(  c^m_{i,j,k} \right)$ for any $i < s^m$, $j < r^m_i$, $k < r^m_{i,j}$ and $m \in \omega$,
            \item[c)] $p_{m}- \lim\limits_{n \to \infty} \phi \left( \frac{1}{N_m} g^m_{l}(n) \right) = \phi ( d^m_{l} )$ for any $l < t^m$ and $m \in \omega$.
        \end{enumerate}
    \end{lem}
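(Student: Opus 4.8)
The plan is to construct $\phi$ directly, coordinate by coordinate. First I would make a reduction: each integer stack has countable support, there are countably many of them, and $c$ together with all the $c^m_{i,j,k}$ and $d^m_l$ has countable total support, so all the relevant data lives inside a single countable $D\subseteq\mathfrak c$. It thus suffices to build a homomorphism $\mathbb Q^{(D)}\to\mathbb T$ with the required properties and then extend it to $\mathbb Q^{(\mathfrak c)}$ arbitrarily, since $\mathbb T$ is divisible. A homomorphism $\mathbb Q^{(D)}\to\mathbb T$ amounts to choosing, for each $\alpha\in D$, a homomorphism $\mathbb Q\to\mathbb T$, i.e.\ a compatible system of divisions of one point of $\mathbb T$; as the only divisions that ever occur are by the $N_m$'s and by the finitely many entries of the stack elements, only finitely many of these matter per coordinate. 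For each $m$ let $\Gamma_m=\{\xi^m_i(n):i<s^m,\ n\in A_m\}\cup\{\mu^m_l(n):l<t^m,\ n\in A_m\}$ be the \emph{leading coordinates} of $\mathcal S_m$. The plan is to set $\phi\equiv 0$ off $\bigcup_m\Gamma_m$, choose $\phi$ on each $\Gamma_m$ so as to realize (b) and (c) for that $m$, and secure (a) with the freedom left over.

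The main obstacle is that different stacks may share coordinates while $\phi|_{\Gamma_m}$ is heavily constrained, so overlapping leading-coordinate sets would impose mutually incompatible requirements. This is where incomparability of the selective ultrafilters is essential. Using the standard combinatorial machinery for pairwise incomparable selective ultrafilters --- which controls how far functions defined on sets belonging to distinct such ultrafilters can share values --- I would first shrink each $A_m$ to some $A_m'\in p_m$ so that the stacks become mutually independent: each $\Gamma_m$ becomes disjoint from the supports of all other stacks, the finitely many easy ``constant leading coordinate'' situations being handled by hand. Replacing $A_m$ by $A_m'\in p_m$ does not change the $p_m$-limits sought, so this is harmless; afterwards every coordinate occurring in the support of a stack element of $\mathcal S_m$ either lies in $\Gamma_m$ or has $\phi$-value $0$, and the problem splits into one independent task per $m$. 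I expect this untangling, together with its bookkeeping, to absorb most of the effort.

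Fix $m$. Shrinking $A_m$ further along $p_m$ I may also assume that $\xi^m_i$ is injective and avoids $\bigcup_{j,k}\supp c^m_{i,j,k}$, that the $\mu^m_l$'s avoid the corresponding $\supp d^m_l$'s, and that the bounded integers $g^m_l(n)(\mu^m_l(n))$ (condition (iv)) and the relevant bounded patterns of the stack elements are constant along $p_m$. Then I process the leading coordinates of $\mathcal S_m$ in their natural order --- the one in which the support of each stack element involves only leading coordinates preceding it: first the $g_l$'s by decreasing $l$, where (i)--(ii) make $\mu^m_l(n)$ fresh at its turn, so for each $n$ one picks $\phi(\tfrac1{N_m}\chi_{\mu^m_l(n)})$ making $\phi(\tfrac1{N_m}g^m_l(n))=\phi(d^m_l)$ \emph{exactly}, which gives (c); then the $f_{i,j,k}$'s by level $i$. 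At level $i$, for each $n\in A_m'$ I must choose a single value $\beta_n=\phi(\tfrac1{N_m}\chi_{\xi^m_i(n)})$ such that, simultaneously for all $j<r^m_i$ and $k<r^m_{i,j}$, $\phi(\tfrac1{N_m}f^m_{i,j,k}(n))$ equals $\phi(c^m_{i,j,k})$ up to an error tending to $0$ along $p_m$. This is the technical heart, and it is exactly what the stack axioms are designed for: $|f^m_{i,j,k}(n)(\xi^m_i(n))|$ diverges monotonically (vii), the blocks are ordered by growth ((viii) and (ix)), within a block $f^m_{i,j,k}(n)(\xi^m_i(n))/f^m_{i,j,0}(n)(\xi^m_i(n))\to\theta_{i,j,k}$ (vi) with the $\theta_{i,j,k}$ injective and $\mathbb Q$-linearly independent (v), and the lower-level leading coordinates, already decided, contribute boundedly (x). A successive-approximation argument --- build $\beta_n$ as a limit of rational corrections, each correction pinning down one more term and staying small thanks to the monotone divergence, with $\mathbb Q$-independence of the $\theta_{i,j,k}$ keeping the finite linear system solved at each stage non-degenerate --- then produces the required $\beta_n$, and passing to $p_m$-limits gives (b). This last step is essentially the single-stack case of the arguments in \cite{Tomita15}, which I would reproduce here.

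Finally, for (a): since $c\neq 0$ pick $\gamma_0\in\supp c$. If $\gamma_0\notin\bigcup_m\Gamma_m$, set $\phi(\chi_{\gamma_0})$ (rather than $0$) to a value making $\phi(c)\neq 0$, which is possible because the remaining entries of $c$ contribute a fixed element of $\mathbb T$. If instead $\gamma_0\in\Gamma_m$, use the ample non-uniqueness in the realization of $\mathcal S_m$ --- for instance in the choice of roots at the leading coordinates --- to arrange $\phi(c)\neq 0$ while preserving (b) and (c). Gluing the homomorphisms defined on the pairwise disjoint $\Gamma_m$'s, the zero map on the rest of $D$, and an arbitrary extension to $\mathbb Q^{(\mathfrak c)}$ yields the desired $\phi$.
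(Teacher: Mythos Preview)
The paper does not prove this lemma; it is quoted as Lemma~8.4 of \cite{Tomita15} and used as a black box, so there is no in-paper proof to compare your proposal against.

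That said, your sketch is a faithful outline of the original argument in \cite{Tomita15}: the reduction to a countable set $D$ of relevant coordinates, the use of pairwise incomparability of the selective ultrafilters to shrink each $A_m$ within $p_m$ so that the leading-coordinate sets $\Gamma_m$ become pairwise disjoint (and disjoint from the supports of the other stacks), the recursive definition of $\phi$ on each $\Gamma_m$ processing first the $g_l$'s via (i)--(iv) and then the $f_{i,j,k}$'s level by level via (v)--(x), and finally the perturbation at one coordinate to secure $\phi(c)\neq 0$. These are exactly the ingredients of the proof in \cite{Tomita15}. The only caveat is that the ``successive-approximation'' step for choosing $\beta_n=\phi(\tfrac{1}{N_m}\chi_{\xi^m_i(n)})$ is the technical core of \cite{Tomita15} and spans several pages there, so ``which I would reproduce here'' understates the remaining work; but as a plan your proposal is correct and matches the cited source.
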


   We will need the following definition related to stacks:

    \begin{defin} 
    For $\gamma=0, 1$, let $\mathcal S_\gamma$, be the stacks defined by $s^\gamma$, $t^\gamma$, $M^\gamma$, $(r_i^\gamma: i<s^\gamma)$, $(r_{i, j}^\gamma: 0\leq i<s^\gamma, 0\leq j<r_i^\gamma)$, $(f_{i, j, k}^{\gamma}: i<s^\gamma, j<r_i^\gamma, k<r_{i, j}^\gamma)$, $(\xi_i^\gamma: i<s^\gamma$, $(\mu_l^\gamma: l<t^\gamma)$ and $(\theta_{i, j, k}^\gamma: i<s^\gamma, j<r_i^\gamma, k<r_{i, j}^\gamma)$.
    
    Assume $\mathcal S_0$ and $\mathcal S_1$ have disjoint supports. The union of $\mathcal S_0$, $\mathcal S_1$ is the stack $\mathcal S_0\sqcup \mathcal S_1$ defined as follows:

    Let $t=t^0+t^1$, $s=s^0+s^1$ and $M=M^0+M^1$. Let $g_l=g^0_l$ for $l<t_o$ and $g_l=g^1_{l-t_0}$ if $t^0\leq l <t^0+t^1$. If $i<s^0$ define $r_i=r^0_i$, $r_{i,j}=r^0_{i,j}$ for $j<r_i$ and $f_{i,j,k}=f^0_{i,j,k}$.

    If $s^0\leq i< s^0+s^1$ define $r_i=r^1_{i-s^0}$, $r_{i,j}=r^1_{i-s^0,j}$ for $j<r_i$ and $r_{i,j}=r^1_{i-s^0,j}$ for $j< r_{i}=r^1_{i-s^0}$ and $f_{i,j,k}=f^1_{i-s^0,j,k}$ for $j<r_i$ and $k<r_{i,j}$. Define $\xi_l$'s and $\theta_{i,j,k}$ simillarly.
    \end{defin}

    It is straightforward to verify that $\mathcal S_0\sqcup \mathcal S_1$ is an integer stack. We note that this easy verification is the only verification in this paper for which looking at bullet points i)-x) is needed, and we leave the proof as an easy exercise. Observe, however, that the hypothesis of disjoint supports is relevant. 

\section{Main result}

   In this section, we will show that $\mathbb N\oplus \mathbb Z^{(\mathfrak c)}$ can be endowed with a countably compact Hausdorff topology. Notationally, it will be easier to work with the isomorphic semigroup $\mathbb H=\{x \in \mathbb Z^{(\mathfrak c)}: x(0)\geq 0\}$.

   We will rely on the following easy lemma.

   \begin{lem}\label{freeSubgroup}
       Let $G$ be a torsion-free Abelian group. Then:       
       \begin{enumerate}
           \item $\Ult_p(G)/\underline G$ is torsion-free.
           \item For every finite subset $B$ of $\Ult_p(G)$, $(\langle B\rangle \oplus \underline G)/\underline G$ is free and finitely-generated
       \end{enumerate}
   \end{lem}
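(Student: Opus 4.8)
The plan is to derive both parts from a single elementary fact — that in a torsion-free group the equation $nx=g$ has at most one solution — together with the structure theorem for finitely generated abelian groups; part (2) will then be essentially a corollary of part (1).

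For item (1), I would argue directly that the abelian group $\Ult_p(G)/\underline G$ has no non-trivial torsion. Suppose $n\geq 1$ and $[f]_p\in\Ult_p(G)$ satisfy $n\big([f]_p+\underline G\big)=\underline G$, i.e. $n[f]_p=[\vec g]_p$ for some $g\in G$. Unwinding the definition of $\simeq_p$, the set $P=\{k\in\omega : n\,f(k)=g\}$ belongs to $p$; in particular $P\neq\varnothing$. Fix $k_0\in P$ and put $a=f(k_0)\in G$. For every $k\in P$ we have $n\,f(k)=g=n\,a$, hence $n\big(f(k)-a\big)=0$, and since $G$ is torsion-free this forces $f(k)=a$. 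Thus $f$ agrees with $\vec a$ on the set $P\in p$, so $[f]_p=[\vec a]_p\in\underline G$, that is, $[f]_p+\underline G=\underline G$. Hence $\Ult_p(G)/\underline G$ is torsion-free.

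For item (2), let $\pi:\Ult_p(G)\to\Ult_p(G)/\underline G$ be the quotient homomorphism. Then $\pi$ maps $\langle B\rangle$ onto the subgroup $(\langle B\rangle+\underline G)/\underline G$, which is therefore generated by the finite set $\pi[B]$ and so is finitely generated. At the same time it is a subgroup of $\Ult_p(G)/\underline G$, which is torsion-free by item (1), hence it is itself torsion-free. A finitely generated torsion-free abelian group is free, by the structure theorem for finitely generated abelian groups, which finishes the argument. (If one prefers to read $\langle B\rangle\oplus\underline G$ as an external direct sum, the quotient is just $\langle B\rangle$, which is a finitely generated subgroup of the torsion-free group $\Ult_p(G)$ and hence free by the same reasoning; so the statement holds under either reading.)

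I do not anticipate a genuine obstacle here. The only point requiring a moment's care is item (1): it rests entirely on uniqueness of division in a torsion-free group, and once that observation is made the rest is bookkeeping plus a standard appeal to the classification of finitely generated abelian groups.
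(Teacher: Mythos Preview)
Your proof is correct and follows essentially the same approach as the paper: both establish (1) by unwinding $n[f]_p\in\underline G$ and using torsion-freeness of $G$ to conclude $[f]_p\in\underline G$, and both obtain (2) as an immediate consequence of (1) together with the structure theorem for finitely generated abelian groups. If anything, your argument for (1) is slightly more explicit than the paper's, which asserts the existence of $g\in G$ with $na=n[\vec g]_p$ without spelling out that such $g$ arises as $f(k_0)$ for some $k_0$ in the relevant set.
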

   \begin{proof}

   $\Ult_p(G)/\underline G$ is torsion-free: if $a\in \Ult_p(G)$ and $n$ is a positive integer such that $n(a+\underline G)=0$, then $na \in \underline G$. Thus, there exists $g \in G$ such that $na=n[\vec g]_p=[\vec{ng}]_p$. Write $a=[f]_p$. Thus, $\{i \in \omega: n(f(i)-g)=0\}\in p$, so $\{i \in \omega: f(i)-g=0\}\in p$, entailing $a=[f]_p=[\vec g]_p\in \mathbb G$.
   
    For the second item, $(\langle B\rangle \oplus \underline G)/\underline G$ is torsion-free by item (1). As it is also finitely generated, it follows that it is free.
   \end{proof}

    For the next proof, recall that a finitely generated torsion-free Abelian group is free. 
    \begin{prop} \label{PropWw}
    Let $( r_i \mid i < m)$ be a finite family in $\mathbb N^\omega$ and $p$ be an ultrafilter. There exist a family $(h_i: i<k)$ in $\mathbb Z^{\omega}$, a family $(b_i: i<k) \in \mathbb N^k$ and a family $(a_{i, j}: i<m, j<k)$ of integers such that:
    
    \begin{enumerate}
        \item $[r_i]_p = [\vec b_i + \displaystyle\sum_{j < k} a_{i,j}.h_j]_p$ for every $i<k$, and
        \item $([h_j]_p: j<k)$ is $p$-independent mod constants (with $G=\mathbb Z$).
    \end{enumerate}
    \end{prop}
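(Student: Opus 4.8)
The plan is to obtain the sequences $h_j$ as lifts of a basis of a finitely generated free quotient of $\Ult_p(\mathbb Z)$, and then to adjust them by constant sequences so that the accompanying constant terms become non-negative. First, let $H$ be the subgroup of $\Ult_p(\mathbb Z)$ generated by $\{[r_i]_p:i<m\}\cup\underline{\mathbb Z}$. By Lemma \ref{freeSubgroup}, applied with $G=\mathbb Z$ (which is torsion-free) and $B=\{[r_i]_p:i<m\}$, the quotient $H/\underline{\mathbb Z}$ is free of some finite rank $k\le m$; I would fix a basis $(\bar h_j:j<k)$ of it and, for each $j<k$, pick a representative $\tilde h_j\in\mathbb Z^\omega$ with $[\tilde h_j]_p+\underline{\mathbb Z}=\bar h_j$. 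Since the classes $[\tilde h_j]_p+\underline{\mathbb Z}$ are pairwise distinct and $\mathbb Z$-linearly independent (they form a basis), the family $([\tilde h_j]_p:j<k)$ is $p$-independent mod constants; this already secures condition (2) for $(\tilde h_j:j<k)$, and the $h_j$ produced below differ from the $\tilde h_j$ only by constant sequences, so condition (2) will survive the adjustment.

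Next, I would address condition (1). Each $[r_i]_p+\underline{\mathbb Z}$ lies in $H/\underline{\mathbb Z}$, hence equals $\sum_{j<k}a_{i,j}\bar h_j$ for suitable integers $a_{i,j}$ ($i<m$, $j<k$). Consequently $[r_i]_p-\sum_{j<k}a_{i,j}[\tilde h_j]_p\in\underline{\mathbb Z}$, say it equals $[\vec c_i]_p$ with $c_i\in\mathbb Z$; equivalently, $[r_i]_p=[\vec c_i]_p+\sum_{j<k}a_{i,j}[\tilde h_j]_p$ for every $i<m$. This is already the desired identity \emph{except} that $c_i$ need not be non-negative, and forcing the constant terms into $\mathbb N$ is really the only point that requires an idea; the rest is bookkeeping with the monomorphism $g\mapsto[\vec g]_p$.

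The key step is therefore the following. For each $i<m$ the set $\{n\in\omega : r_i(n)=c_i+\sum_{j<k}a_{i,j}\tilde h_j(n)\}$ belongs to $p$, so the intersection of these finitely many sets belongs to $p$ as well; I would fix some $n_0$ in it. Then $c_i+\sum_{j<k}a_{i,j}\tilde h_j(n_0)=r_i(n_0)$ for every $i<m$, and this is a natural number because $r_i\in\mathbb N^\omega$. Now put $e_j:=\tilde h_j(n_0)$, $h_j:=\tilde h_j-\vec e_j\in\mathbb Z^\omega$ for $j<k$, and $b_i:=r_i(n_0)\in\mathbb N$ for $i<m$. Adding a constant sequence to $\tilde h_j$ leaves its class modulo $\underline{\mathbb Z}$ unchanged, so $([h_j]_p:j<k)$ is still $p$-independent mod constants, which is (2). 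Finally, since $b_i-\sum_{j<k}a_{i,j}e_j=c_i$ and $[h_j]_p=[\tilde h_j]_p-[\vec e_j]_p$, one computes
\[
[\vec b_i]_p+\sum_{j<k}a_{i,j}[h_j]_p=[\vec c_i]_p+\sum_{j<k}a_{i,j}[\tilde h_j]_p=[r_i]_p ,
\]
which is (1). The main obstacle, as noted, is the non-negativity of the $b_i$, and it is overcome by the observation that all the defining equations hold simultaneously at a single index $n_0$ in the $p$-large set, at which the values $r_i(n_0)$ are automatically non-negative.
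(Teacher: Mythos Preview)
Your proof is correct and follows essentially the same approach as the paper's: both apply Lemma~\ref{freeSubgroup} to obtain a basis of $(\langle [r_i]_p:i<m\rangle\oplus\underline{\mathbb Z})/\underline{\mathbb Z}$, lift it to sequences, express each $[r_i]_p$ accordingly, and then pick a single index $n_0$ in the $p$-large set of agreement to shift the basis sequences so that the constant terms become $r_i(n_0)\in\mathbb N$. Your write-up is arguably a bit more explicit about why $p$-independence mod constants survives the constant shift, but the argument is the same.
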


    \begin{proof}   
    Let $B=\{[r_i]_p: i<m\}$. By Lemma \ref{freeSubgroup}, $H=(\langle B\rangle\oplus \underline {\mathbb Z})/\underline {\mathbb Z}$ is finitely-generated and free. Let $( g_j : j < k)$ be a family of members of $\mathbb{Z}^{\omega}$ such that $( [g_j]_p +\underline{\mathbb Z} : j < k )$ is a basis of  $H$. Thereby, $([g_j]_p: j < k )^\frown ([\vec 1]_p)$ is a basis of $\langle B\rangle\oplus \underline{\mathbb Z}$.
    
    So, for every $i < m$, there are $z_i \in \mathbb Z$ and $(a_{i,j}: j < k ) \in  \mathbb{Z} ^{k}$ such that $[r_i] = \left[\vec z_i+\sum_{j < k} a_{i,j}.g_j \right]_p$. We have that $A=\bigcap_{j<k}\{n \in \omega:  r_i(n)=z_i+\sum_{j<k}a_{i, j}g_j(n)\}\in p$. Let $n_0\in A$ and, for each $i<m$, let  $b_i=r_i(n_0)\in \mathbb N$ and $h_j=g_j-\vec{g_j(n_0)}$.
    
    It follows that for every $n \in A$ and $i<n$:

    $$b_i+\sum_{j<k}a_{i, j}h_j(n)=z_i+\sum_{j<k}a_{i, j}g_j(n_0)+\sum_{j<k}a_{i, j}[g_j(n)-g_j(n_0)]=z_i+\sum_{j<k}a_{i, j}g_j(n)=h_i(n).$$
    \end{proof}

    For the remaining of this paper, we will assume the existence of $\mathfrak c$ incomparable selective ultrafilters. First, we enumerate all finite subsets of sequences in $\mathbb H$ and then enumerate all the associated stacks that will be necessary by using the following notation:
    
    \begin{notation} \label{wstack} We fix the following:
    \begin{itemize}
    \item $( p_{\alpha} : \alpha \in \Lim )$ a family of pairwise incomparable selective ultrafilters.
        \item $\mathbb H=\{x \in \mathbb Z^{(\mathfrak c)}: x(0)\geq 0\}=\mathbb N. \chi_0 \oplus \mathbb{Z}^{(\mathfrak{c} \setminus \lbrace 0 \rbrace)}$.
        \item $\mathbb H_0=\{x \in \mathbb H: \supp x\subseteq \{0\}\}\approx \mathbb N$ and $\mathbb H_1=\{x \in \mathbb H: 0 \notin \supp x\}\approx \mathbb Z^{(\mathfrak c)}$.

        \item $( (h^{\alpha}_{0}, ..., h^{\alpha}_{m_{\alpha}-1}) : \alpha \in \Lim  )$ a surjective enumeration of all nonempty finite families of sequences in $\mathbb H$ such that $\displaystyle\bigcup_{i < m_{\alpha}} \supp h^{\alpha}_{i} \subset \alpha$ for every $\alpha \in \Lim$.
                \end{itemize}
        For each $\alpha \in \Lim$, we define:
        \begin{itemize}
        \item $( u^{\alpha}_i : i < m_{\alpha}) $ and $( v^{\alpha}_i : i < m_{\alpha})$ families of sequences in  $\mathbb{H}_1$ and $\mathbb N$ (respectively) such that $h^{\alpha}_i = v^{\alpha}_i. \vec\chi_{ {0}} + u^{\alpha}_i$ for each $i < m_{\alpha}$.
        \item $( w^{\alpha}_j : j < k_{\alpha})$ a family of sequences in $\mathbb Z^{\omega}$, $(b_i^\alpha: i<m_{\alpha})\in \mathbb N^{m_\alpha}$ $(a_{i, j}^\alpha: i<m_\alpha, j<k_\alpha)$ is a family of integers such that \begin{enumerate}
            \item $\forall i<m_\alpha\, [v_i^\alpha]_{p_\alpha}=[\vec b_i^\alpha+\sum_{j<k_\alpha} a_{i, j}^\alpha w_j^\alpha]_{p_\alpha}.$
            \item  $([h_i]_{p_\alpha}: i<m_\alpha)$ is $p_\alpha$-independent mod constants (with respect to $\mathbb Z$).

        \end{enumerate}

        This exists by Proposition \ref{PropWw}. Notice that the second item implies that          $([h_i\chi_0]_{p_\alpha}: i<m_\alpha)$ is $p_\alpha$-independent mod constants with respect to $\mathbb Z^{(\mathfrak c)}$.
        \item $(\mathcal S^0_{\alpha}, N^0_{\alpha}, C^0_\alpha)$  a stack $p_\alpha$-associated to $(w_j^\alpha \chi_0: j<k_\alpha)$. This exists by Proposition \ref{Tomita15lemma7.1}. Notice that the supports of the sequences in $\mathcal S_\alpha^0$ are contained in $\{0\}$.
        \item  $(z^{\alpha}_i : i<o_\alpha)$ a finite family of sequences in $\mathbb Z^{(\mathfrak c)}$ such that 
        $([z^\alpha_i]_{p_\alpha}+ \underline{\mathbb Z^{(\mathfrak c)}}: i<o_\alpha)$ 
        is a basis of $(\langle [u_i^\alpha]_{p_\alpha}: i<m_\alpha\rangle \oplus \underline{\mathbb Z^{(\mathfrak c)}})/\underline{\mathbb Z^{(\mathfrak c)}}$.

        This exists by Proposition \ref{freeSubgroup}. Notice that $(z^{\alpha}_i : i<o_\alpha)$ is a $p_\alpha$-independent mod constants family. We may choose the $z_i^\alpha$'s so that $0 \notin \supp z^i_\alpha$.

                \item $(\mathcal S^1_{\alpha}, N^1_{\alpha}, C^1_\alpha)$ a stack $p_\alpha$-associated to $(z_i^\alpha: i<o_\alpha)$.
                
                This exists by Proposition \ref{Tomita15lemma7.1}. By Remark \ref{suptRemark}, $0$ is not in the supports of the elements of $\mathcal S^1_\alpha$.
                \item $A_{\alpha} = \bigcap_{i<m_\alpha}\{n \in C^0_{\alpha}  \cap C^1_{\alpha}: v_i^{\alpha} b_i^\alpha+\sum_{j<k_{\alpha}}a_{i, j}^\alpha w_j^\alpha(n)\}\in p_{\alpha}$.
                \item $N_\alpha=N_\alpha^1.N_\alpha^2$, $\mathcal S_\alpha=\mathcal S_\alpha^0\sqcup \mathcal S_\alpha^1$.
                \item An injective, finite enumeration of the sequences of $\mathcal S_\alpha$, $(y^\alpha_j: j<q_\alpha)$.
    \end{itemize}

    Notice that  $( w^{\alpha}_j \chi_0:j<k_\alpha)^\frown ( z^{\alpha}_i \mid i < o_{\alpha} )$ is $p_\alpha$-associated to $(\mathcal S_{\alpha}, N_{\alpha}, A_{\alpha})$.

    Furthermore, the supports of the elements of $\mathcal S^\alpha$ are contained in $\bigcup_{i < m_{\alpha}} \supp h^{\alpha}_{i} \subset \alpha$ for every $\alpha \in \Lim$.         
    \end{notation}

    Finally, we give a positive answer to Question 9.4 of \cite{Tomita15}.
    
    \begin{exam}\label{maintresult}
   Assuming the existence of $\mathfrak c$ pairwise incomparable selective ultrafilters. There exists a group topology on $\mathbb Z^{(\mathfrak c)}$ for which all the finite powers of the subsemigroups $\mathbb H$ and $\mathbb H_1$ are countably compact.
    \end{exam}

    \begin{proof}
    Fix $c \in \mathbb Z^{(\mathfrak c)}\setminus \{0\}$. There exists a countable set $T_c \subset \mathfrak c$  such that
        \begin{itemize}
            \item $\supp c \subset T_{c}$, 
            \item if $\alpha \in T_{c}$ and $\beta \leq \alpha < \beta + \omega$ then $[\beta, \beta + \omega) \subset T_{c}$,
            \item if $\alpha \in T_{c}$ is a limit ordinal then $\bigcup_{j<q_\alpha} \supp y^\alpha_j \subset T_{c}$,
            \item for each positive integer $N$, there exists infinitely many $\alpha \in T_{c}$ such that $N_{\alpha} = N$.       
        \end{itemize}

    Let $( \alpha_m \mid m \in \omega )$ be an enumeration of all limit ordinals in $T_c$. In virtue of Lemma \ref{Tomita15lema8.4}, there exists a homomorphism $\phi_c : \mathbb{Q}^{(T_c)} \rightarrow \mathbb{T}$ such that    
        \begin{enumerate}
            \item[a)] $\phi_c(c) \neq 0$,
            \item[b)] $p_{\alpha_m}- \lim\limits_{n \to \infty} \phi_c \left( \frac{1}{N_{\alpha_m}} y^{\alpha_m}_{j}(n) \right) = \phi_c \left( \chi_{\alpha_m + j} \right)$ for all $m \in \omega$, $j< q_{\alpha_m}$.
        
        \end{enumerate}

    By the divisibility of $\mathbb T$ and compactness, we can recursively extend $\phi_c$ to $\mathbb Q^{(\mathfrak c)}$ so that:
        \begin{enumerate}
            \item[a')] $\phi_c(c) \neq 0$,
            \item[b')] $p_{\alpha}- \lim\limits_{n \to \infty} \phi_c \left( \frac{1}{N_{\alpha}} y^{\alpha}_{j}(n) \right) = \phi_c \left( \chi_{\alpha + j} \right)$ for all $m \in \omega$, $j< q_\alpha$.
        
        \end{enumerate}

    Thus, the weak topology on $\mathbb Q^{(\mathfrak c)}$ generated by the family of homomorphisms $(\phi_c : c \in \mathbb Z^{(\mathfrak c)} \setminus \lbrace 0 \rbrace )$, which is a Hausdorff group topology, satisfies:
            \begin{enumerate}
            \item[a'')] $\phi_c(c) \neq 0$,
            \item[b'')] $p_{\alpha}- \lim\limits_{n \to \infty} \frac{1}{N_{\alpha}} y^{\alpha}_{j}(n)= \ \chi_{\alpha + j} $ for all $m \in \omega$, $j< q_\alpha$.
        
        \end{enumerate}
    
    Therefore, for each $\alpha \in \Lim$, the sequences in the stack $ \frac{1}{N_\alpha}\mathcal S_\alpha$ have $p_\alpha$-limits  in $\{\chi_{\alpha+j}: j<q_\alpha\}$. Thus the sequences in $( w^{\alpha}_j \chi_0:j<k_\alpha)$ and $( z^{\alpha}_i : i < o_{\alpha} )$  have a $p_\alpha$-limits in $\langle \chi_{\alpha+j}:j<q_\alpha \rangle \subseteq 
    \mathbb H_1$. By the definition of the $z^\alpha_i$'s, it follows that each $u^\alpha_i$'s has a $p_\alpha$-limit in $\mathbb H_1$ as well. This proves that all powers of $\mathbb H_1^m$ are countably compact.
    
    By the definition of the $w^\alpha_j$´s and $b^\alpha_i$'s it then follows that the $p_\alpha$-limit of each $v_\alpha^i\chi_0$ is in $\mathbb H$, thus, the $p_\alpha$-limit of each $u_i^\alpha$ is in $\mathbb H$ as well. This proves that all powers of $\mathbb H^m$ are countably compact.
      \end{proof}

\section{$\mathbb H$ has no semigroup topology which makes its $\omega$-th power countably compact}

In this section, we prove (in ZFC) that $\mathbb N\oplus \mathbb Z^{(\mathfrak c)}$ does not admit a Hausdorff group topology which makes its $\omega$-th power countably compact, concluding the proof of the consistency of the existence of a Wallace group whose $\omega$-th power is its first power that is not countably compact. 

We prove something more general. For that purpose, we say that an Archimedian semigroup with identity is a $4$-uple $(T, +, 0, \leq )$ such that $(T, +, 0)$ is an Abelian semigroup with identity $0$, $\leq $ is a total order such that for all $a, b, c, d \in A$, if $a\leq b$ and $c\leq  d$, then $a+c\leq b+d$, and for all $a>0$ and $b\in T$, there exists $n \in \mathbb N$ such that $na>b$. An element $a \in T$ is said to be non-negative if $a\geq 0$.

In what follows, $G$ needs not to be Abelian.
\begin{prop}\label{restrictionZ}
    Let $G$ be a semigroup with identity and $T$ be an Archimedian semigroup with identity whose all elements are non-negative. If $T\neq \{0\}$, then there is no Hausdorff semigroup topology in $T\times G$ which makes its $\omega$-th power countably compact.
\end{prop}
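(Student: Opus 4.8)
The plan is to derive a contradiction from the assumption that $(T\times G)^\omega$ is countably compact. Since $T\neq\{0\}$ and every element of $T$ is non-negative, fix some $a\in T$ with $a>0$. Consider the sequence $(x_n)_{n\in\omega}$ in $(T\times G)^\omega$ whose $n$-th term is the $\omega$-tuple that records, in coordinate $k$, the element $(ka, e_G)$ for $k<n$ and $(0,e_G)$ for $k\ge n$ — in other words, the ``staircase'' sequence built from the partial constant sequences $\vec{0}, (a,e_G), (2a,e_G),\dots$ placed along the diagonal. The key observation is that in the $T$-coordinate this is, up to the projection $T\times G\to T$, a sequence whose $k$-th coordinate eventually stabilizes to $(ka,e_G)$; so if $(x_n)$ had an accumulation point $y=(y_k)_{k\in\omega}$ in $(T\times G)^\omega$, then for each fixed $k$ the $k$-th coordinate of $x_n$ is eventually constant equal to $(ka,e_G)$, forcing $y_k=(ka,e_G)$. (This uses that a sequence which is eventually constant converges to that constant value in a Hausdorff space, hence every accumulation point in the product must have that value in that coordinate.)

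The contradiction then comes from the Archimedean property. I would pass to the $T$-coordinate only: the projection $\pi\colon T\times G\to T$ is a continuous semigroup homomorphism, so $(\pi\circ y_k)_{k} = (ka)_{k\in\omega}$ would be an accumulation point, in $T^\omega$, of the staircase sequence in $T^\omega$ built from the $\vec{0},\vec a, \overrightarrow{2a},\dots$. But now I claim $z:=(ka)_{k\in\omega}\in T^\omega$ has an open neighborhood missing all but finitely many terms of the staircase sequence. Here I use that $T$ with the order topology — or rather, that we can separate points using the order — behaves well: actually the cleanest route is to observe that the staircase sequence in $T^\omega$ is a sequence that actually \emph{converges} coordinatewise to $z$ in the product of the \emph{discrete}-like structure; since $T$ is Hausdorff and the terms in coordinate $k$ are eventually $ka$, the only possible accumulation point in $T^\omega$ is $z$ itself, and an accumulation point of a sequence that has a limit must equal that limit, so the staircase sequence would have to \emph{converge} to $z$. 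This is where I extract the contradiction: I will show the staircase sequence does not converge to $z$ in $T^\omega$ by producing, using the Archimedean axiom and Hausdorffness, an open set around $z$ excluding cofinally many terms.

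Concretely, the term $x_n$ of the staircase, viewed in $T^\omega$, agrees with $z$ on coordinates $<n$ and is $0$ on coordinates $\ge n$. Since $a>0$, we have $0\neq 2a$ wait — more carefully, $0<a\le 2a\le\cdots$, and in particular $0\neq ka$ for $k\ge 1$ because if $ka=0$ then by the order-compatibility $0\le a\le ka=0$ forces $a=0$, contradiction. So for each $k\ge1$ choose, by Hausdorffness of $T$, disjoint open sets $U_k\ni ka$ and $V_k\ni 0$; then $W:=\prod_{k}O_k$ with $O_0=T$, $O_k=U_k$ for $1\le k$ is \emph{not} open in the product (infinitely many nontrivial factors), so this naive approach fails — and this is exactly the main obstacle: the product topology is too coarse to separate $z$ from the staircase by restricting infinitely many coordinates. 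The fix, and the real content of the argument, is to instead use a \emph{single} coordinate in a smarter way together with the Archimedean property: restrict attention to coordinate, say, $0$ won't work either since it is constant.

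The correct idea: replace the naive staircase by the sequence $x_n\in (T\times G)^\omega$ defined by $x_n(k)=(a,e_G)$ if $k\le n$ and $x_n(k)=(0,e_G)$ if $k>n$ — no, the resolution is to use countable compactness of the $\omega$-th power to get, for the sequence $(x_n)$ where $x_n$ has $n$-th coordinate $(na, e_G)$ and all other coordinates $(0,e_G)$, an accumulation point $y$; look at the \emph{diagonal} map $T\times G\to T$, $(s,g)\mapsto s$, and then at the continuous map $\Delta\colon T^\omega\to$ (something) — here I would invoke that countable compactness of $(T\times G)^\omega$ gives countable compactness of $T^\omega$ (continuous image / retract), and then that a countably compact Archimedean ordered semigroup must be bounded, whereas $\{na:n\in\omega\}$ is unbounded by the Archimedean axiom applied to any $b$; the unboundedness contradicts the existence of an accumulation point of the injective sequence $(na)_n$ inside the one-coordinate copy of $T$ sitting in $T^\omega$. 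The hard part, which I would spell out carefully, is precisely this last implication: that in a Hausdorff semigroup topology on $T$ (Archimedean, all elements $\ge 0$), the increasing sequence $(na)_n$ has no accumulation point — equivalently, that such a $T$ cannot be countably compact unless $T=\{0\}$ — and for that I expect to need the order-compatibility of $+$ to show that the ``tail'' sets $\{t\in T: t\ge na\}$ or suitable translates form a filter base of closed sets witnessing non-countable-compactness, together with Hausdorffness to separate the candidate limit from the sequence.
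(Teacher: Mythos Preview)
Your proposal does not reach a proof; each of the three threads you explore has a real obstruction, and the key idea of the paper's argument is absent.

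First, the staircase sequence you start with really does converge to $y=((ka,e_G))_{k\in\omega}$ in the product topology (each coordinate is eventually constant), so there is no contradiction to be found there --- as you eventually notice.

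Second, and more seriously, your fallback of projecting to $T$ is illegitimate: the hypothesis gives a Hausdorff semigroup topology on $T\times G$, not on $T$ and $G$ separately. There is no reason for $\pi:T\times G\to T$ to be continuous, nor even a canonical Hausdorff semigroup topology on $T$ to project to. So the reduction ``$T$ is countably compact and Archimedean, contradiction'' does not get off the ground. Even if one tries to stay inside $T\times G$ and show that the single sequence $(n(a,e_G))_n$ has no accumulation point, Hausdorffness plus separate continuity of translations is not enough: an accumulation point along some ultrafilter $p$ gives no handle on shifts of the sequence along \emph{other} ultrafilters, so no algebraic relation emerges.

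The missing idea, and what the paper does, is to exploit the $\omega$-th power to force \emph{one} ultrafilter that works for countably many shifted copies simultaneously. Fix $c=(\bar c,e_G)$ with $\bar c>0$ and set, for each $k\in\omega$,
\[
x_k=(z)_{i<k}{}^{\frown}(nc)_{n\in\omega}\in (T\times G)^\omega,
\]
a fixed padding element $z$ in the first $k$ slots followed by $0,c,2c,\dots$. Countable compactness of $(T\times G)^\omega$ applied to the sequence $n\mapsto (x_k(n))_{k\in\omega}$ yields a single free ultrafilter $p$ such that every $x_k$ has a $p$-limit $a_k\in T\times G$. Since $x_k(n)+kc=x_0(n)$ for all $n\ge k$, continuity of translation gives $a_k+kc=a_0$ for every $k$. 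Writing $a_0=(r,g)$ and $a_k=(r_k,g_k)$, this says $r_k+k\bar c=r$ with $r_k\ge 0$, hence $r\ge k\bar c$ for all $k$ --- contradicting the Archimedean property. The whole point of the $\omega$-power is to obtain this infinite family of equations $a_k+kc=a_0$ with a common $a_0$; none of your attempted sequences produces such relations.
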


\begin{proof} Assume by contradiction that $H=T\times G$ admits such a topology.

    Fix any $z \in H$ and $\bar c\in T\setminus \{0\}$. Write $c=(\bar c, e_G)$, and for every $n\in \omega$, let $nc=(n\bar c, e_G)$.

    For each $k \in \omega$, let: $$x_k=(z)_{i<k}\,^\frown(nc)_{n \in \omega}.$$
    As $H^\omega$ is countably compact, there exists a free ultrafilter $p$ such that for every $k\in \omega$, there exists $a_k=p$-$\lim x_k$.
    
    By continuity, for every $k \in \omega$, $a_{k}+kc=a_0$.

    There exists $r \in T$ and $g \in G$ such that $a_0=(r, g)$. Let $k\in \omega$ be such that $k\bar c> r$. Write $a_k=(r', g')$. Then $(r, g)=a_0=a_{k}+kc= (r'+k\bar c, g')$. 

    Thus, $r'+k\bar c=r$. As $r'\geq 0$, we have that $r=r'+k\bar c\geq k\bar c+0=k\bar c>r$, a contradiction.
\end{proof}

Notice that all subsemigroups of $\mathbb R$ of non-negative elements (in particular, $\mathbb N$) satisfy what is required by $T$. Thus, combining the preceding result with Example \ref{maintresult} we get:
\begin{cor}\label{mainexample}
    Assuming the existence of $\mathfrak c$ pairwise incomparable selective ultrafilters, there exists a topology in $\mathbb N\oplus \mathbb Z^{(\mathfrak c)}$ which makes it a Wallace semigroup whose first power that fails to be countably compact is $\omega$.
\end{cor}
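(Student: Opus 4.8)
The plan is to deduce Corollary \ref{mainexample} directly by combining Example \ref{maintresult} with Proposition \ref{restrictionZ}, after checking that the semigroup $\mathbb N\oplus \mathbb Z^{(\mathfrak c)}$ fits the framework of both results. First I would recall that $\mathbb N\oplus\mathbb Z^{(\mathfrak c)}$ is isomorphic (as a semigroup) to $\mathbb H=\{x\in\mathbb Z^{(\mathfrak c)}: x(0)\geq 0\}$, which is the form used throughout Section 3; this identification is already fixed in Notation \ref{wstack}. So it suffices to produce the desired topology on $\mathbb H$.

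For the positive part, I would invoke Example \ref{maintresult}: under the hypothesis of $\mathfrak c$ pairwise incomparable selective ultrafilters, there is a Hausdorff group topology on $\mathbb Z^{(\mathfrak c)}$ for which every finite power of the subsemigroup $\mathbb H$ is countably compact. Restricting this topology to $\mathbb H$ gives a Hausdorff semigroup topology (a subspace of a topological group restricted to a subsemigroup is a topological semigroup) in which $\mathbb H^m$ is countably compact for every $m\in\omega$. I would then note that $\mathbb H$ is two-sided cancellative (it sits inside the group $\mathbb Z^{(\mathfrak c)}$) but is not a topological group: it has no inverses, e.g. $\chi_0$ has no additive inverse in $\mathbb H$ since $-\chi_0$ has a negative $0$-th coordinate. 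Hence $\mathbb H$ with this topology is a Wallace semigroup all of whose finite powers are countably compact.

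For the negative part, I would apply Proposition \ref{restrictionZ} with $T=\mathbb N$ (as an Archimedian semigroup with identity whose all elements are non-negative, as observed in the remark preceding the corollary) and $G=\mathbb Z^{(\mathfrak c)}$. Since $T=\mathbb N\neq\{0\}$, the proposition says there is no Hausdorff semigroup topology on $\mathbb N\times\mathbb Z^{(\mathfrak c)}=\mathbb N\oplus\mathbb Z^{(\mathfrak c)}\cong\mathbb H$ whose $\omega$-th power is countably compact. In particular the topology constructed above does not have countably compact $\omega$-th power, and no topology on this semigroup does. Combining, $\omega$ is exactly the least power of $\mathbb H$ that fails to be countably compact: all finite powers are countably compact, and the $\omega$-th power is not (for any Hausdorff semigroup topology whatsoever).

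There is essentially no obstacle here beyond bookkeeping: both halves are immediate consequences of results proved earlier in the paper, and the only things to verify are the routine facts that $\mathbb H$ is cancellative-but-not-a-group and that $\mathbb N$ meets the definition of an Archimedian semigroup with identity of non-negative elements. The mild subtlety worth spelling out is that the non-existence statement is uniform over all admissible topologies, so "first power that fails to be countably compact is $\omega$" is a statement about the specific topology from Example \ref{maintresult} for the finite levels, combined with a ZFC impossibility at level $\omega$ valid for every topology.
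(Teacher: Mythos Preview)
Your proposal is correct and follows exactly the paper's own approach: the corollary is stated there as an immediate combination of Example \ref{maintresult} (for countable compactness of all finite powers of $\mathbb H$) with Proposition \ref{restrictionZ} applied with $T=\mathbb N$ (for the failure at the $\omega$-th power), together with the routine observation that $\mathbb H$ is cancellative but not a group. The only extra content you supply is making explicit why $\mathbb H$ is a Wallace semigroup, which the paper leaves implicit.
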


\section{Comments and questions}
	
    With Corollary \ref{mainexample} in mind, it is natural to ask:
    
    \begin{question}
    Given an uncountable cardinal $\kappa\leq 2^\mathfrak c$, is there a Wallace semigroup $S$ for which $\kappa$ is the smallest cardinal such that $S^\kappa$ is not countably compact? In particular, is there such a semigroup for $\kappa=\omega_1$, $\kappa=\mathfrak c$ or $\kappa=2^{\mathfrak c}$?
    \end{question}

    Due to the restrictions imposed by \cite[Corollary 12, Corollary 14]{T98} and Proposition \ref{restrictionZ}, a natural candidate for the question above is a subsemigroup of $\mathbb Q ^{(\mathfrak c)}$. We note that Bellini, Rodrigues and Tomita  \cite{T21} showed that for every selective ultrafilter $p$, this group admits a $p$-compact group topology, unlike the free Abelian group which cannot have its $\omega$-th power countably compact
    \cite{T98}.

    In \cite{Boero}, a Wallace semigroup whose square is countably compact was obtained directly as the subsemigroup $\mathbb N^{(\mathfrak c)}$ with the subspace topology. Naturally, we ask:

    \begin{question}
    Does $\mathbb N^{(\mathfrak c)}$ admits a semigroup topology that makes its finite powers countably compact? Whose third power is countably compact?
    \end{question}
\section{Acknowledgements}
The first author has received financial support from CAPES (Brazil) as PhD student at the University of São Paulo under supervision of the third listed author.

The second author is a voluntary (unpaid) postdoctoral researcher at the University of São Paulo. He thanks the University for the hospitality. He thanks the financial support of his parents Alberto Rodrigues, Tânia de Oliveira and his wife Bruna Nagano.

The third author has received financial support from FAPESP 2021/00177-4.

	\bibliographystyle{amsplain}

\end{document}